\newtheorem{theorem}{Theorem}[section]
\newtheorem{claim}{}[theorem]
\newtheorem{lemma}[theorem]{Lemma}
\newtheorem{conjecture}[theorem]{Conjecture}
\theoremstyle{definition}
\newcommand{\bF}{\mathbb F}
\newcommand{\bN}{\mathbb N}
\newcommand{\eps}{\varepsilon}
\DeclareMathOperator{\cl}{cl}
\DeclareMathOperator{\PG}{PG}
\DeclareMathOperator{\GF}{GF}
\DeclareMathOperator{\AG}{AG}
\newcommand{\del}{ \backslash  }
\numberwithin{subcase}{case}
\numberwithin{subsubcase}{subcase}
\newenvironment{subproof}[1][\proofname]{%
  \begin{proof}[Subproof:]%
}{%
  \end{proof}%
}
\begin{document}

\title{The Critical Number of $I_{1,t}$-free triangle-free binary matroids}
\author[Nelson]{Peter Nelson}
\address{Department of Combinatorics and Optimization, University of Waterloo, Waterloo, Canada. Email address: {\tt 	apnelson@uwaterloo.ca}}
\author[Nomoto]{Kazuhiro Nomoto}
\address{Department of Combinatorics and Optimization, University of Waterloo, Waterloo, Canada. Email address: {\tt 	knomoto@uwaterloo.ca}}
\thanks{This work was supported by a discovery grant from the Natural Sciences and Engineering Research Council of Canada and an Early Researcher Award from the government of Ontario}

\subjclass{05B35}
\keywords{matroids}
\date{\today}
\begin{abstract}
	A simple binary matroid, viewed as a restriction of a finite binary projective geometry $\PG(n-1,2)$, is $I_{1,t}$-free if for any rank-$t$ flat of $\PG(n-1,2)$, its intersection with the matroid is not a one-element set.
	In this paper, we show that the simple $I_{1,t}$-free and triangle-free binary matroids have bounded critical number for any $t \geq 1$. 
\end{abstract}

\maketitle

\begin{section}{Introduction}
In this paper, we show that the simple $I_{1,t}$-free triangle-free binary matroids have bounded critical number for any $t \geq 1$. A simple binary matroid, viewed as a restriction of a finite binary projective geometry $\PG(n-1,2)$, is \emph{$I_{1,t}$-free} if its intersection with any rank-$t$ flat of $\PG(n-1,2)$ is not an one-element set. The critical number is the quantity $n-k$ where $k$ is the dimension of a largest subgeometry of $\PG(n-1,2)$ disjoint from the matroid. 

To motivate our result, we consider the following conjecture of Bonamy, Kardo\v{s}, Kelly, Nelson and Postle [\ref{bkknp}]. A simple binary matroid is \emph{$I_{s}$-free} if it has no $s$-element independent flat.

\begin{conjecture}[{[\ref{bkknp}]}]\label{chi_bounded_conj}
For any $s \geq 1$, the simple $I_s$-free and triangle-free binary matroids have bounded critical number.
\end{conjecture}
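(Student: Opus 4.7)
The plan is to prove Conjecture~\ref{chi_bounded_conj} by induction on $s$, handling small matroids directly and reducing large matroids to the main theorem of this paper, which treats the $I_{1,t}$-free case.

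For the base cases: $s=1$ forces $M$ to be empty, since any $x\in M$ yields the $1$-element independent flat $\{x\}$, so the critical number is $0$. For $s=2$, the $I_2$-free condition demands $x+y\in M$ for every pair of distinct elements of $M$, which contradicts triangle-freeness whenever $|M|\geq 2$; hence $|M|\leq 1$ and the critical number is at most $1$.

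For the inductive step $s\geq 3$, assume the conjecture holds for all $s'<s$ with bounds $c(s')$, and let $M$ be $I_s$-free and triangle-free in $\PG(n-1,2)$. I split into two cases on the size of $M$. In the \emph{small} case, when $|M|$ is bounded by some threshold $N(s)$, a greedy argument produces a flat of rank at least $n-N(s)$ disjoint from $M$ (iteratively restrict to a hyperplane avoiding one remaining point of $M$), yielding critical number at most $N(s)$. In the \emph{large} case, when $|M|>N(s)$, I claim that $M$ must be $I_{1,f(s)}$-free for some function $f$ depending only on $s$; the main theorem of this paper, applied with $t=f(s)$, then concludes.

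To justify the large-case claim, suppose for contradiction that some rank-$f(s)$ flat $F$ satisfies $F\cap M=\{x\}$. Since $|M\setminus F|$ is large, I aim to select $s-1$ points $p_1,\ldots,p_{s-1}\in M\setminus F$ so that $\{x,p_1,\ldots,p_{s-1}\}$ is an $s$-element independent flat of $M$, contradicting $I_s$-freeness. Triangle-freeness automatically handles the $\binom{s}{2}$ pairwise sums $p_i+p_j$ and the $s-1$ sums $x+p_i$. The principal obstacle is ensuring that the remaining $2^s-\binom{s}{2}-s-1$ higher-order sums $\sum_{i\in I}p_i+\epsilon x$ (with $|I|+\epsilon\geq 3$) also lie outside $M$; a naive union bound is insufficient because $|M|$ can be as large as $2^{n-1}$, so purely random choices of the $p_i$'s fail typically. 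I expect that this must be overcome by iterating a density-increment or "link" argument — passing to a carefully chosen subflat where $M$ avoids a prescribed sum-structure — and invoking the main theorem of this paper at intermediate values $t'<f(s)$ to obtain the structural control on $M$ needed to select the $p_i$'s successively. Calibrating this iteration to produce an explicit $f(s)$ (and the matching threshold $N(s)$) is the core technical challenge of the proof.
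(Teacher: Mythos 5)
You are trying to prove a statement that this paper does not prove: Conjecture~\ref{chi_bounded_conj} is stated here as an open conjecture of Bonamy, Kardo\v{s}, Kelly, Nelson and Postle (trivial for $s\leq 3$, known for $s=4$ by [\ref{nn2}], open for $s\geq 5$). The paper's actual theorem is the different statement that $I_{1,t}$-free, triangle-free matroids have bounded critical number, and the paper's remark about the relation between the two notions goes in the opposite direction from what you need: $I_{1,t}$-freeness implies $I_{s'}$-freeness for suitable $s'$, i.e.\ $I_{1,t}$-free is the \emph{more} restrictive hypothesis. Your plan hinges on the converse reduction, namely that every sufficiently large $I_s$-free, triangle-free matroid is $I_{1,f(s)}$-free for some $f(s)$ depending only on $s$, and that reduction is false.

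Concretely, in $G=\PG(n-1,2)$ identify points with nonzero vectors of $\bF_2^n$ and take $E=\{e_1\}\cup A$ where $A=\{v : v_1=0,\ v_2=1\}$. This $M=(E,G)$ is triangle-free (two elements of $A$ sum to a vector with $v_2=0$, and $e_1+a$ has $v_1=1$, $v_2=1$), and it is $I_4$-free, hence $I_s$-free for all $s\geq 4$: any four independent points of $E$ include three points $a_1,a_2,a_3\in A$, and $a_1+a_2+a_3\in A\subseteq E$ lies in their closure, so no $4$-element independent flat exists. Yet $|E|=1+2^{n-2}$ is huge and the hyperplane $W=\{v: v_2=0\}$ satisfies $W\cap E=\{e_1\}$, so $M$ contains induced $I_{1,t}$-restrictions for every $t\leq n-1$; no bound $f(s)$ can work. (This example also shows that the obstacle you flagged — higher-order sums such as $p_1+p_2+p_3$ landing back in $M$ — is not a technical nuisance to be fixed by a density-increment iteration; it is exactly the structural phenomenon that defeats the reduction, since here every choice of three points of $M\setminus W$ fails.) Your base cases $s\leq 2$ and the small-$|M|$ greedy bound $\chi(M)\leq |E|$ are fine but carry no weight: the entire content sits in the large case, which you yourself leave as "the core technical challenge" and which, as stated, rests on a false claim. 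As the conjecture is genuinely open for $s\geq 5$, no argument of this shape that simply quotes Theorem~\ref{main_theorem} can close it.
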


This conjecture resembles the famous Gy{\'a}rf{\'a}s-Sumner Conjecture for graphs ([\ref{g85}],[\ref{s81}]), which claims that, for any tree $T$ and clique $K$, the graphs that omit $T$ and $K$ as induced restrictions have bounded chromatic number. It is similar in the sense that omitting a tree, which is maximally acyclic, is analogous to $I_s$-freeness. Conjecture \ref{chi_bounded_conj} corresponds to the special case where $K$ is a triangle. It is worth noting that the conjecture in fact fails if one replaces `triangle-free' with `$F_7$-free'. Bonamy, Kardo\v{s}, Kelly, Nelson and Postle provide a family of counterexamples called the \emph{even-plane matroids}. For more information, see [\ref{bkknp}].

Conjecture \ref{chi_bounded_conj} is trivial for $s = 1,2,3$, and was resolved by the authors [\ref{nn2}] for the case $s = 4$. It remains open for $s \geq 5$. 

We can also consider the following more general problem. For $1 \leq s \leq t$, a simple binary matroid, reviewed as a restriction of a finite binary projective geometry $\PG(n-1,2)$, is \emph{$I_{s,t}$-free} if its intersection with any rank-$t$ flat of $\PG(n-1,2)$ is not an $s$-element independent flat.

\begin{conjecture}\label{chi_bounded_conj_ex}
For any $1 \leq s \leq t$, the simple $I_{s,t}$-free and triangle-free binary matroids have bounded critical number. 
\end{conjecture}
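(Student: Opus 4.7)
The plan is to prove Conjecture \ref{chi_bounded_conj_ex} by induction on $s$, with the $s=1$ case handled by the main theorem of this paper (valid for all $t \geq 1$). Let $f(s,t)$ denote the desired bound on the critical number, to be constructed recursively.

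For the inductive step, fix $s \geq 2$ and $t \geq s$, and suppose $f(s', t')$ has been constructed for all $s' < s$ and $t' \geq s'$. Let $M \subseteq \PG(n-1,2)$ be a simple $I_{s,t}$-free, triangle-free binary matroid. The natural dichotomy is: either $M$ is also $I_{s-1, t-1}$-free --- in which case the inductive hypothesis gives that the critical number of $M$ is at most $f(s-1, t-1)$, and we are done --- or there exist an $(s-1)$-element independent flat $J \subseteq M$ and a rank-$(t-1)$ flat $F_J$ of $\PG(n-1,2)$ with $M \cap F_J = J$.

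In the witness case, $I_{s,t}$-freeness gives the local constraint that for every rank-$t$ flat $F \supseteq F_J$, the coset $F \setminus F_J$ satisfies $|M \cap (F \setminus F_J)| \neq 1$; indeed, any point in $F \setminus F_J$ is outside $\cl_{\PG}(J)$, so a single such point together with $J$ would form an $s$-element independent flat inside the rank-$t$ flat $F$. Working in the quotient $\PG(n-1,2) / F_J \cong \PG(n-t,2)$, define $\wh M$ to consist of those points $[v]$ with $M \cap (F_J + v) \neq \emptyset$. The constraint above says that every point of $\wh M$ has a fiber of size at least $2$ in $M$, and a standard skew-lift argument shows that any rank-$k$ flat of the quotient disjoint from $\wh M$ lifts to a rank-$k$ flat of $\PG(n-1,2)$ skew to $F_J$ and disjoint from $M$. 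Consequently, the critical number of $M$ is at most $(t-1)$ plus the critical number of $\wh M$, and it suffices to bound the latter by a function of $s$ and $t$.

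The main obstacle, and the core technical difficulty, lies in applying induction to $\wh M$. The contraction by $F_J$ does not preserve triangle-freeness: a triangle $[v_1], [v_2], [v_3]$ in $\wh M$ corresponds to elements $u_i \in M \cap (F_J + v_i)$ with $u_1 + u_2 + u_3 \in F_J \setminus \{0\}$, which is a four-element dependent configuration in $M$ that triangle-freeness alone does not preclude. To overcome this I would either delete a carefully chosen subset of $\wh M$ to restore triangle-freeness while showing that the deletion costs only a bounded additive term in the critical number, or replace triangle-freeness in the induction by a weaker invariant preserved under contraction. A related difficulty is that $I_{s,t}$-freeness for $t > s$ is strictly weaker than $I_{s,s}$-freeness, so even the resolved cases of Conjecture \ref{chi_bounded_conj} cannot be substituted for the $(s-1)$-case of Conjecture \ref{chi_bounded_conj_ex}, and one must verify a genuinely $I_{s',t'}$-type freeness for $\wh M$ (with parameters dictated by how rank-$t$ flats of $\PG(n-1,2)$ through $F_J$ descend to the quotient). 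Reconciling these two points --- loss of triangle-freeness under contraction, and the need for an inductive invariant strictly stronger than that of Conjecture \ref{chi_bounded_conj} --- is what I expect to be the dominant technical content of any resolution of Conjecture \ref{chi_bounded_conj_ex} beyond the case established here.
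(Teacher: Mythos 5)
There is a genuine gap, and it is one you acknowledge yourself: what you have written is a strategy sketch, not a proof. The statement you are addressing is stated in the paper as a \emph{conjecture}; the paper proves only the case $s=1$ (Theorem \ref{main_theorem}), via Green's regularity lemma, the tripod construction and the geometric Ramsey theorem, and the general case --- even the case $s=t$ of Conjecture \ref{chi_bounded_conj} for $s\geq 5$ --- remains open. Your base case is therefore fine, but your inductive step stops exactly at the point where the real work begins. In the ``witness'' branch, after contracting $F_J$ you need two things to invoke any inductive hypothesis on $\wh M$: (i) a triangle-freeness-type hypothesis, and (ii) an $I_{s',t'}$-freeness-type hypothesis. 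You correctly observe that (i) fails --- a triangle in $\wh M$ comes from $u_1+u_2+u_3 \in F_J \setminus \{0\}$, which triangle-freeness of $M$ does not forbid --- and you do not establish (ii) either: $I_{s,t}$-freeness of $M$ constrains rank-$t$ flats of the original geometry, and a rank-$(t-1)$ flat of the quotient meeting $\wh M$ in an $(s-1)$-element independent flat pulls back to a rank-$(2t-2)$ flat through $F_J$, about which $I_{s,t}$-freeness says nothing. Proposing to ``delete a carefully chosen subset to restore triangle-freeness'' or to ``replace triangle-freeness by a weaker invariant preserved under contraction'' names the difficulty without resolving it; nothing in the proposal bounds $\chi(\wh M)$, so no case beyond $s=1$ is actually established.

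The parts you do argue carefully are correct but peripheral: the observation that $|M \cap (F \setminus F_J)| \neq 1$ for every rank-$t$ flat $F \supseteq F_J$ follows from $I_{s,t}$-freeness as you say, and the skew-lift bound $\chi(M) \leq (t-1) + \chi(\wh M)$ is sound (a flat of the quotient avoiding $\wh M$ pulls back to a flat through $F_J$ meeting $M$ only in $J \subseteq F_J$, and any complement of $F_J$ inside it avoids $M$). But these reductions only relocate the problem to $\wh M$, where the inductive invariant is missing. If you want to make progress in the spirit of the paper, note that its $s=1$ argument does not proceed by contraction at all: it exploits the fact that a single point together with an $E$-avoiding $t$-flat already violates $I_{1,t}$-freeness, combines Ramsey-type density (Claim \ref{dense}) with the tripod/sum-free structure (Claim \ref{thirdpoint}) and Green's regularity to extract a large flat disjoint from $E$ directly. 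Extending that additive-combinatorial scheme, rather than a quotient induction, is what the authors themselves suggest as the plausible route to Conjecture \ref{chi_bounded_conj_ex}, and it is not carried out here.
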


It is immediate that Conjecture \ref{chi_bounded_conj} is a special case of Conjecture \ref{chi_bounded_conj_ex} by setting $s=t$. But these two conjectures are in fact equivalent; for any given $s, t$, $1 \leq s \leq t$, there exists a sufficiently large $s'$ for which $I_{s,t}$-freeness implies $I_{s'}$-freeness. 

Our main result is a proof of Conjecture \ref{chi_bounded_conj_ex} for the case $s=1$, making partial progress towards Conjecture \ref{chi_bounded_conj}. 

In this paper, it will be helpful to be explicit about the ambient binary projective geometry in which our matroids are materialised. Therefore, we make the following modifications, and explicitly redefine our terminology correspondingly. A \emph{simple binary matroid} (hereafter just a \emph{matroid}) is a pair $M = (E,G)$ where $G$ is a finite binary projective geometry $\PG(n-1,2)$, and $E$, called the \emph{ground set}, is any subset of the points of $G$. Abusing notation, we write $G$ for the points of $G$. The dimension of $M$ is the dimension $n$ of $G$ as a geometry. Two matroids $(E_1,G_1)$ and $(E_2,G_2)$ are isomorphic if an isomorphism from $G_1$ to $G_2$ maps $E_1$ to $E_2$. Note that for a matroid $M=(E,G)$, we do not require $E$ to span $G$.

The existence of an ambient space allows us to make the following natural definition, in the same way induced subgraphs are defined. A matroid $N$ is an \emph{induced restriction}, or sometimes \emph{induced submatroid}, of $M$ if $N = (E \cap F, F)$ for some subgeometry $F$ of $G$. If $M$ has no induced restriction isomorphic to $N$, then $M$ is \emph{$N$-free}. 

For an $n$-dimensional matroid $M=(E,G)$, the \emph{complement} of $M$ is the matroid $M^c = (G \del E, G)$. We write $\omega(M)$ for the dimension of the largest subgeometry contained in $E$. The \emph{critical number} of $M$ is the quantity $\chi(M) = n - \omega(M^c)$. This parameter $\chi$ is a natural analogue of chromatic number for graphs, although the connection is not obvious at first (see [\ref{oxley}] p. 588 for a discussion). 

A \emph{flat} is a subgeometry of $G$. A \emph{triangle} is a two-dimensional flat, and if $E$ contains no triangle of $G$, then we say that $M$ is \emph{triangle-free}. A \emph{hyperplane} is a maximally proper flat of $G$. If $E$ is a one-element subset of a $t$-dimensional binary projective geometry $G$, we write $I_{1,t}$ for the matroid $(E,G)$. Now, we state our main result.

\begin{theorem}\label{main_theorem}
For any $t \geq 1$, the $I_{1,t}$-free and triangle-free simple binary matroids have bounded critical number.
\end{theorem}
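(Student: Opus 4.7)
Proof plan: The plan is to proceed by induction on $t$. For the base case $t = 1$: any $e \in E$ is itself a rank-$1$ flat with $|E \cap \{e\}| = 1$, violating $I_{1,1}$-freeness; hence $E = \emptyset$ and $\chi(M) = 0$. It is also useful to record a clean anchor at $t = 2$: $I_{1,2}$-freeness combined with triangle-freeness forces $|E \cap F| \in \{0, 2\}$ for every rank-$2$ flat (triangle) $F$ of $G$, so, identifying $G$ with $\bF_2^n \setminus \{0\}$ and extending by $\mathbf{1}_E(0) = 0$, the indicator $\mathbf{1}_E$ satisfies $\mathbf{1}_E(v) + \mathbf{1}_E(w) + \mathbf{1}_E(v+w) \equiv 0 \pmod 2$ on every triangle, which is exactly the condition that $\mathbf{1}_E$ be $\bF_2$-linear; hence $E$ is either empty or an affine hyperplane of $\bF_2^n$, and $\chi(M) \leq 1$.

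For the inductive step, assume a bound $f(t-1)$ exists and let $M = (E, G)$ be $I_{1,t}$-free and triangle-free. Consider the dichotomy: (A) $M$ is also $I_{1,t-1}$-free, in which case the inductive hypothesis directly gives $\chi(M) \leq f(t-1)$; or (B) there is a rank-$(t-1)$ flat $F_0$ of $G$ with $E \cap F_0 = \{e_0\}$. In case (B), applying $I_{1,t}$-freeness to every rank-$t$ flat extending $F_0$ forces each such extension to contain a further point of $E$. Translating to linear algebra, if $V_0 \subseteq \bF_2^n$ is the $(t-1)$-dimensional linear subspace with $V_0 \setminus \{0\} = F_0$, then every coset of $V_0$ other than $V_0$ itself meets $E$; equivalently, the projection $\pi \colon \bF_2^n \to \bF_2^n / V_0$ sends $E$ onto all non-zero cosets, and hits $V_0$ via $e_0$.

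The principal obstacle is case (B): we must exploit this surjectivity together with triangle-freeness (which gives $e + e' \in E^c$ for all distinct $e, e' \in E$) to produce a subgeometry of bounded codimension in $E^c$. One natural approach is to select a transversal $(e_c)_{c}$ with $e_c \in E \cap c$ for each non-zero $c \in \bF_2^n / V_0$, and to analyze the cocycle $(c_1, c_2, c_3) \mapsto e_{c_1} + e_{c_2} + e_{c_3} \in V_0$ over triangles $c_1 + c_2 + c_3 = 0$ of the quotient; triangle-freeness of $E$ forces this cocycle to avoid $0$, so it lands in $V_0 \setminus \{0\}$, and its structure should either allow construction of a linear section on a large subspace of $\bF_2^n / V_0$ (whose complement pulls back to a large subgeometry of $E^c$) or enable a recursive descent in $t$. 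A persistent technical difficulty is that triangle-freeness is \emph{not} inherited by matroid contraction, so one cannot naively induct on a quotient matroid; the analysis must retain the ambient $\bF_2^n$ throughout, using the $t=2$ anchor above as the eventual terminus of the reduction.
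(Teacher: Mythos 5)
Your reduction is fine as far as it goes: the $t=1$ base case and the $t=2$ anchor are correct, the dichotomy (A)/(B) is legitimate, and in case (B) the conclusion that every nonzero coset of $V_0$ meets $E$ (and that the transversal cocycle lands in $V_0\setminus\{0\}$) is correctly derived. The problem is that case (B) is where essentially all of the content of the theorem lives, and there you offer only a hope, not an argument: ``its structure should either allow construction of a linear section \ldots or enable a recursive descent'' is not supported by any mechanism. Worse, the first branch is self-defeating as stated: if the transversal $c\mapsto e_c$ were linear on a subspace $W$ of $\bF_2^n/V_0$ with $\dim W\geq 2$, its image would be a subspace of $\bF_2^n$ all of whose nonzero points lie in $E$, i.e.\ a triangle inside $E$, so triangle-freeness forbids this branch outright; and in any case a section whose image lies in $E$ does not produce a subgeometry disjoint from $E$, which is what a bound on $\chi$ requires. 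You correctly flag that triangle-freeness is not preserved by contraction, but you never supply the replacement analysis, so the proof stops exactly at the point of difficulty.

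For comparison, the paper's induction step does not split on whether $M$ is $I_{1,t-1}$-free. It (i) takes a maximal tripod restriction $T_k$, bounding $k<t/2$ because $T_k$ forces an induced $C_{5,2k+2}$ and hence an induced $I_{1,2k+1}$; (ii) colours a complementary flat and uses the geometric Ramsey theorem (from Hales--Jewett) together with Bose--Burton to extract a dense set $X$ of points lying outside $E$ but ``fully attached'' to $E\cap G_1$, for which maximality of the tripod gives $(X+X+X)\cap E=\varnothing$; and (iii) applies the key lemma derived from Green's arithmetic regularity lemma to find a subspace or affine subspace of bounded codimension inside $X+X+X$ --- the projective outcome bounds $\chi$ directly, while the affine outcome yields a bounded-codimension flat on which $M$ is $I_{1,t-1}$-free, which is what feeds the induction. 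Nothing in your proposal plays the role of (i)--(iii), and note that the density you do gain in case (B) (roughly $2^{1-t}$) cannot be cashed in via known results on dense triangle-free binary matroids, which require density above $1/4$; this is precisely why the regularity machinery is brought in. To complete your route you would need a genuine structural theorem about the cocycle, or an additive-combinatorial input of comparable strength to the key lemma.
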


The proof relies on a key lemma, which is derived from Green's regularity lemma for abelian groups ([\ref{g}]). The proof also uses a Ramsey-type result for matroids. Using these two results leads to an extremely large bound on $\chi$, and it will be interesting to what better bounds one can find. 

In addition to making partial progress towards Conjecture \ref{chi_bounded_conj}, we hope that our proof illustrates a potential use of additive combinatorial techniques to solve Conjecture \ref{chi_bounded_conj} and other similar problems. 
\end{section}

\begin{section}{Preliminaries}
In this section, we give a list of results we need to prove our main theorem as black boxes. The main ingredient is the following key lemma, which is an application of Green's regularity lemma for abelian groups [\ref{g}]. We will derive this key lemma in the next section. $\AG(n-1,2)$ is an \emph{affine geometry}, which is a binary projective geometry $\PG(n-1)$ with a hyperplane removed. For any two sets $X,Y$, their sum is $X+Y = \{x+y \mid x \in X, y \in Y\}$. 

\begin{lemma}
For all $0 < \alpha \leq 1$, there exists an integer $l \geq 0$ such that for each $X \subseteq \bF_2^n$ with $|X| \geq \alpha 2^{n}$, $X+X+X$ either contains a subspace of codimension $l$ or affine subspace of codimension $l$. 
\end{lemma}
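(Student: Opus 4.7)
The plan is to invoke Green's arithmetic regularity lemma for $\bF_2^n$ applied to the indicator function $\mathds{1}_X$, and then to use Fourier analysis to locate a single coset of a low-codimension subspace of $\bF_2^n$ inside $X+X+X$.

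Fix $\alpha > 0$ and set $\epsilon = \alpha^2/2$. Green's regularity lemma will produce a subspace $H \leq \bF_2^n$ of codimension at most some $M(\epsilon)$ for which $|\hat{\mathds{1}_X}(\chi)| \leq \epsilon$ for every character $\chi$ not trivial on $H$, where we normalise $\hat f(\chi) = \mathbb{E}_x f(x)\chi(x)$. Write $\alpha' := |X|/2^n \geq \alpha$, write $\pi: \bF_2^n \to G' := \bF_2^n/H$ for the quotient map, and define the coset-density function $g: G' \to [0,1]$ by $g(\pi(x)) = |X \cap (x+H)|/|H|$, so that $\mathbb{E}[g] = \alpha'$. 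Using the canonical identification of $H^\perp$ with the dual of $G'$, one has $\hat{\mathds{1}_X}(\chi) = \hat g(\chi)$ for $\chi \in H^\perp$; splitting the Fourier expansion of $\mathds{1}_X^{*3}(x)$ along $H^\perp$ and its complement then yields the decomposition
\[\mathds{1}_X^{*3}(x) \;=\; g^{*3}(\pi(x)) \;+\; e(x),\]
where the regularity bound together with Parseval controls the error by
\[|e(x)| \leq \max_{\chi \notin H^\perp}|\hat{\mathds{1}_X}(\chi)| \cdot \sum_{\chi}|\hat{\mathds{1}_X}(\chi)|^2 \leq \epsilon \alpha'.\]

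The function $g^{*3}: G' \to \bR_{\geq 0}$ is non-negative with mean $(\alpha')^3$, so some $Y \in G'$ achieves $g^{*3}(Y) \geq (\alpha')^3$. For every $x \in \pi^{-1}(Y)$ it follows that
\[\mathds{1}_X^{*3}(x) \geq (\alpha')^3 - \epsilon\alpha' = \alpha'\bigl((\alpha')^2 - \alpha^2/2\bigr) \geq \alpha \cdot \alpha^2/2 > 0,\]
so $\pi^{-1}(Y) \subseteq X+X+X$. The preimage $\pi^{-1}(Y)$ is a coset of $H$, hence an affine subspace of $\bF_2^n$ of codimension at most $M(\alpha^2/2)$, and it is a linear subspace precisely when $Y$ is the identity of $G'$. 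Taking $l = M(\alpha^2/2)$ then closes the argument.

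The main obstacle will be the Fourier bookkeeping: calibrating $\epsilon$ in terms of $\alpha$ so that the error $e(x)$ stays strictly below the typical value of the structured term $g^{*3}(\pi(x))$, and carrying out the descent of $H^\perp$-characters to characters of $G'$ cleanly in the decomposition. All the estimates above are routine once the Fourier conventions are fixed, so the only genuine input is invoking Green's regularity lemma in its sharp $L^\infty$-Fourier form.
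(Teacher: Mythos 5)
Your argument is correct, but it runs along a genuinely different track from the paper. The paper uses Green's regularity lemma in its coset form (Lemma \ref{Green's regularity}): it finds an $\eps$-regular subspace $H$, locates by a counting/pigeonhole argument a single coset $H+a_0$ that is simultaneously dense ($\geq \frac{\alpha}{2}|H|$ points of $X$) and $\eps$-uniform, and then applies the counting lemma (Lemma \ref{counting}) \emph{inside that one coset} to show every element of $H+a_0$ lies in $X+X+X$. You instead work globally: you split the normalized convolution $\mathds{1}_X^{*3}$ into the part supported on characters of $H^{\perp}$, which you correctly identify with $g^{*3}\circ\pi$ for the coset-density function $g$ on the quotient, plus an error controlled by $\max_{\chi\notin H^{\perp}}|\hat{\mathds{1}}_X(\chi)|\cdot\sum_\chi|\hat{\mathds{1}}_X(\chi)|^2\leq \eps\alpha'$, and then average to find a coset on which $g^{*3}\geq(\alpha')^3$; with $\eps=\alpha^2/2$ the error cannot cancel this, so the whole coset lies in $X+X+X$. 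The bookkeeping you sketch (descent of $H^{\perp}$-characters to the quotient, Parseval, the calibration of $\eps$) all checks out. One attribution point worth flagging: the ``sharp $L^\infty$-Fourier form'' you invoke is not literally Green's regularity lemma, which asserts $\eps$-uniformity of $X$ on almost all cosets rather than a global bound on the non-$H^{\perp}$ Fourier coefficients. It does follow from Green's lemma (summing the per-coset uniformity over good cosets and trivially bounding the $\leq\eps$-fraction of bad ones gives the global bound with $2\eps$, at the cost of a tower-type codimension), but it also follows much more cheaply from Parseval alone: take $H$ to be the orthogonal complement of the span of the $\eps$-large spectrum of $\mathds{1}_X$, whose dimension is at most $\eps^{-2}$. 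Run that way, your Bogolyubov-style argument actually yields $l=O(\alpha^{-4})$, a far better bound than the tower-type $l$ produced by the paper's regularity approach, so your route is not only valid but quantitatively stronger; the paper's route has the advantage of using only the combinatorial uniformity/counting statements it already set up, with no Fourier formalism.
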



In addition, we will use the following well-known results from matroid theory. The first is the matroidal analogue of the multicolour version of Ramsey's Theorem for graphs, and it follows from the Hales-Jewett Theorem [\ref{HJ}]. 

\begin{theorem}\label{Ramsey}
For any $c \geq 1$, $r_1, r_2, \cdots r_c \geq 1$, there exists an integer $N = GR(r_1,r_2,\cdots,r_c)$ such that for all $n \geq N$, if the points of $\PG(n-1,2)$ are coloured with $c$ different colours, it must contain a monochromatic copy of $\PG(r_i -1,2)$ in colour $i$ for some $1 \leq i \leq c$. 
\end{theorem}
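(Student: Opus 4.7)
The plan is to reduce to the symmetric case $r_1 = \cdots = r_c$ and then invoke the classical Graham--Leeb--Rothschild (GLR) theorem for vector spaces over $\bF_2$, which is the standard consequence of Hales--Jewett that the statement hints at.

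For the reduction, I set $r = \max_i r_i$ and observe that any monochromatic copy of $\PG(r-1,2)$ of colour $i$ contains a copy of $\PG(r_i-1,2)$ of the same colour. So it suffices to prove the symmetric version: for all $c, r \geq 1$ there exists $N = N(c,r)$ such that every $c$-colouring of the points of $\PG(N-1,2)$ contains a monochromatic $\PG(r-1,2)$.

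For the symmetric version, I identify points of $\PG(N-1,2)$ with one-dimensional subspaces of $\bF_2^N$ and copies of $\PG(r-1,2)$ with $r$-dimensional linear subspaces; the claim is then the $q=2$, $k=1$, $\ell=r$ instance of GLR. To prove this from Hales--Jewett, I apply the Graham--Rothschild parameter-word theorem to the alphabet $\bF_2$: for $N$ sufficiently large in terms of $c$ and $k$, every $c$-colouring of $\bF_2^N$ contains a monochromatic combinatorial $k$-subspace, i.e., a set of the form $\{a + \sum_{i=1}^k b_i e_{S_i} : b \in \bF_2^k\}$ where $\{S_0, S_1, \ldots, S_k\}$ partitions $[N]$, $a$ is supported on $S_0$, and $e_{S_i}$ is the indicator vector of $S_i$.

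The hard part will be the affine-to-linear conversion: the combinatorial subspace produced by Hales--Jewett is an affine, not linear, subspace of $\bF_2^N$, whereas $\PG(r-1,2)$ corresponds to a linear subspace. I would handle this by taking $k$ slightly larger than $r$ and using a spare free block $e_{S_j}$ to absorb the shift $a$, extracting a linear $r$-subspace from the monochromatic affine $k$-subspace. This bookkeeping is standard in the deduction of GLR from Hales--Jewett (see, e.g., Graham--Rothschild--Spencer, \emph{Ramsey Theory}). Once it is in hand, the nonzero vectors of the monochromatic linear $r$-subspace form the required monochromatic $\PG(r-1,2)$ in some colour $i$, and since $r \geq r_i$ we can find a $\PG(r_i-1,2)$ inside it, completing the argument.
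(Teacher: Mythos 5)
Your reduction to the symmetric case $r_1=\cdots=r_c=r$ is fine, and simply citing the Graham--Leeb--Rothschild vector-space Ramsey theorem (the $q=2$, points case) would suffice; the paper itself treats Theorem \ref{Ramsey} as a black box attributed to Hales--Jewett, so up to that point you are on the same track. The genuine gap is precisely at the step you flag as the hard part: the affine-to-linear conversion as you describe it cannot work. A monochromatic combinatorial $k$-subspace coming from Hales--Jewett/Graham--Rothschild over the alphabet $\bF_2$ is a coset $a+W$ with $W=\langle e_{S_1},\dots,e_{S_k}\rangle$ and $a$ supported on the fixed block $S_0$; since $S_0$ is disjoint from the free blocks, $a+e_{S_j}\neq 0$ for every spare block, so no spare block can ``absorb'' the shift. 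Worse, no bookkeeping can extract what you need from a proper coset: if $L$ is a linear subspace with $\dim L\geq 2$ and $L\setminus\{0\}\subseteq a+W$, then choosing distinct nonzero $x,y\in L$ gives $x+y\neq 0$ and $x+y\in L\setminus\{0\}\subseteq a+W$, while also $x+y\in W$ because $x$ and $y$ lie in a common coset of $W$; since $(a+W)\cap W=\varnothing$ when $a\notin W$, a proper coset contains the nonzero points of no linear subspace of dimension at least $2$. So a monochromatic affine subspace yields at best a monochromatic $\PG(0,2)$, and the conclusion you need --- all points of a projective (i.e.\ linear) $r$-space monochromatic --- does not follow by this route.

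What a correct argument looks like: the linear/projective point-Ramsey statement is genuinely harder than the affine one. Either invoke Graham--Leeb--Rothschild as a theorem (its proof, and the treatment in Graham--Rothschild--Spencer, goes through the Graham--Rothschild parameter-set theorem with subspaces encoded by echelon-form parameter words, not by cancelling shifts), or run a Folkman-type iteration: using the affine (combinatorial-subspace) statement repeatedly, construct linearly independent $v_1,\dots,v_m$ such that the colour of any nonzero combination $\sum_{i\in I}v_i$ depends only on $\min I$, then pigeonhole with $m=c(r-1)+1$ to obtain $r$ vectors all of whose $2^r-1$ nonzero combinations receive the same colour. Either of these closes the gap; as written, your proposal does not.
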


In our application, the $r_i$'s will always be the same. We therefore write $GR(c,r)$ for the Ramsey number by only specifying the number of colours $c$ and the dimension $r$ of a monochromatic projective geometry we expect to find. 

We will use the following classical theorem of Bose and Burton, which is the geometric analogue of Tur\'an's theorem [\ref{bb}]. Their theorem also determines the matroids that attain the bound, but we will not need it in this paper.

\begin{theorem}[{[\ref{bb}]}]\label{bbt}
	Let $M = (E,G)$ be a matroid. If $t \ge 0$ and $E$ contains no $(t+1)$-dimensional flat of $G$, then $|E| \le 2^{\dim(M)}(1-2^{-t})$.
\end{theorem}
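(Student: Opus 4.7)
The plan is to prove the contrapositive: any $E \subseteq G$ with $|E| > 2^n(1 - 2^{-t}) = 2^n - 2^{n-t}$ contains a $(t+1)$-dimensional flat of $G$. I would induct on $t$; the case $t = 0$ is immediate from $|E| > 0$, which yields a point of $E$ (a $1$-dimensional flat).

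For the inductive step, I would pick any $v \in E$ and pass to the quotient geometry $G / \{v\}$, which is a copy of $\PG(n-2, 2)$ whose points biject with the lines of $G$ through $v$, and in which $k$-dimensional flats lift to $(k+1)$-dimensional flats of $G$ through $v$. I would then partition the lines through $v$ according to their intersection with $E \setminus \{v\}$: let $E_1 \subseteq G/\{v\}$ consist of those lines $\{v, w, v + w\}$ with both $w$ and $v+w$ in $E$, and let $E_2 \supseteq E_1$ consist of those with at least one of $w$ and $v+w$ in $E$. Counting $E \setminus \{v\}$ line by line gives the identity $|E_1| + |E_2| = |E| - 1$. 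Since $|E_2| \le |G/\{v\}| = 2^{n-1} - 1$, the hypothesis on $|E|$ forces
\[
|E_1| > 2^{n-1} - 2^{n-t} = 2^{n-1}\bigl(1 - 2^{-(t-1)}\bigr).
\]
Applying the inductive hypothesis at parameter $t-1$ to $E_1$ inside $G/\{v\}$ then produces a $t$-dimensional flat $F \subseteq E_1$. Its lift to $G$ is a $(t+1)$-dimensional flat $F'$ through $v$ whose non-$v$ points all lie in $E$ by the definition of $E_1$; since $v \in E$ as well, we conclude $F' \subseteq E$.

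The only real moving part is the inequality $|E_1| > 2^{n-1} - 2^{n-t}$, and this step also explains why the naive induction on $n$ via a single hyperplane $|E| = |E \cap H| + |E \setminus H|$ loses a factor of two and fails: trading a hyperplane for the quotient by a \emph{point of} $E$ is precisely what recovers that factor, because the bound $|E_2| \le 2^{n-1} - 1$ absorbs all lines meeting $E$ at $v$ together with one further point. The vacuous range $n \le t$ needs no argument, since then $2^n(1 - 2^{-t}) \ge 2^n - 1 \ge |E|$.
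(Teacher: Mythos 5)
Your argument is correct. Note, though, that the paper does not prove this statement at all: Theorem~\ref{bbt} is quoted as a classical black box, with a citation to Bose and Burton, and is only ever invoked (in Claim~\ref{dense}) to bound the density of the complement of a flat-free set. So there is no proof in the paper to compare against; what you have written is a self-contained elementary proof of the bound, and it is essentially the standard inductive argument for the Bose--Burton theorem. The key points all check out: the line-by-line count through a point $v \in E$ does give $|E_1| + |E_2| = |E| - 1$ (lines with two points of $E\setminus\{v\}$ are counted twice, lines with one point once), the bound $|E_2| \le 2^{n-1}-1$ then yields $|E_1| > 2^{n-1}\bigl(1 - 2^{-(t-1)}\bigr)$ with strictness preserved, and the lift of a $t$-dimensional flat of the quotient $G/\{v\}$ contained in $E_1$ is a $(t+1)$-dimensional flat through $v$ all of whose points lie in $E$. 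Your closing remarks are also apt: inducting on $t$ via the quotient by a point of $E$ (rather than on $n$ via a hyperplane) is exactly what avoids losing a factor of two, and the range $n \le t$ is vacuous. The one thing your route does not give, and which the original Bose--Burton paper does, is the characterization of the extremal sets (complements of $(n-t)$-dimensional flats); the paper explicitly notes it does not need that refinement, so your proof covers everything the paper uses.
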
 
\end{section}

\begin{section}{Regularity}
In this section, we will derive our key lemma from Green's regularity lemma for abelian groups [\ref{g}]. 

Let $V=\GF(2)^n$ and let $X \subseteq V$. We say that $X$ is \emph{$\eps$-uniform} if for each hyperplane $H$ of $V$ $$||H \cap X | - |X \del H|| \leq \eps |V|.$$

Let $H$ be a subspace of $V$ and $v \in V$. We call the sets of the form $H+v = \{x+v \mid x \in H\}$ the \emph{cosets} (note that $H$ is considered a coset of $H$ itself). We say that $H$ is \emph{$\eps$-regular} with respect to $V$ and $X$ if for all but an $\eps$-fraction of cosets $H' = H+v$ of $H$, the set $(H' \cap X) + v$ is $\eps$-uniform. Regularity is a measure of how the elements of $X$ are distributed across the different cosets of $H$ in $V$. Green's regularity ensures that there is an $\eps$-regular subspace of bounded codimension. We state the version of his theorem for $V=\GF(2)^n$. 

\begin{lemma}[{[\ref{g}]}]\label{Green's regularity}
For all $0 < \eps < \frac{1}{2}$, there exists $l \in \bN$ such that for each $X \subseteq V$ there is a subspace $H \subseteq V$ of codimension at most $l$ that is $\eps$-regular with respect to $X$ and $V$. 
\end{lemma}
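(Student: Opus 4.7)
The plan is to prove Lemma~\ref{Green's regularity} via an energy-increment argument in Fourier analysis on $V = \GF(2)^n$. Writing $\chi_r(x) = (-1)^{r \cdot x}$ for $r \in V$ and $\widehat{1_X}(r) = \bE_{x \in V}[1_X(x)\chi_r(x)]$, and letting $H^\perp = \{r \in V : r \cdot h = 0 \text{ for all } h \in H\}$ for each subspace $H \leq V$, the central object is the ``energy''
\[
\mathcal{E}(H) \;:=\; \bE_{v \in V}\left(\frac{|X \cap (H+v)|}{|H|}\right)^{2} \;=\; \sum_{r \in H^\perp} \bigl|\widehat{1_X}(r)\bigr|^2,
\]
the second equality being Parseval applied to $f_H := 1_X \ast ((|V|/|H|)1_H)$, whose Fourier transform is $\widehat{1_X}$ restricted to $H^\perp$. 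It follows that $\mathcal{E}$ is monotone under refinement ($H' \leq H$ implies $H^\perp \subseteq (H')^\perp$, hence $\mathcal{E}(H') \geq \mathcal{E}(H)$) and bounded by $\|1_X\|_2^2 = |X|/|V| \leq 1$.

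The heart of the proof is an \emph{energy increment}: if $H$ is not $\eps$-regular with respect to $X$ and $V$, then there exists $H' \leq H$ with $\mathrm{codim}_V(H') \leq \mathrm{codim}_V(H) + k$ and $\mathcal{E}(H') - \mathcal{E}(H) \geq \delta$, for constants $k = k(\eps) \in \bN$ and $\delta = \delta(\eps) > 0$ depending only on $\eps$. To establish this, I would exploit that $\eps$-uniformity of $Y_v := (X \cap (H+v)) + v \subseteq H$ is equivalent to $\max_{\psi \neq 1_H}|\widehat{1_{Y_v}}(\psi)| \leq \eps$ (Fourier internal to $H$); non-regularity thus yields, on an $\eps$-fraction of cosets, some character $\psi_v \neq 1_H$ with $|\widehat{1_{Y_v}}(\psi_v)|^2 > \eps^2$. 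Combining this with the identity
\[
\sum_{r \in V \setminus H^\perp} \bigl|\widehat{1_X}(r)\bigr|^2 \;=\; \bE_v \sum_{\psi \neq 1_H} \bigl|\widehat{1_{Y_v}}(\psi)\bigr|^2
\]
(both sides equal $|X|/|V| - \mathcal{E}(H)$ by Parseval) produces the global lower bound $\sum_{r \notin H^\perp} |\widehat{1_X}(r)|^2 > \eps^3$. I would then apply a Chang-type large-spectrum argument to $\Lambda_\eta := \{r \in V : |\widehat{1_X}(r)|^2 \geq \eta\}$, for an appropriate $\eta = \eta(\eps)$, to extract a bounded collection $r_1,\dots,r_k \in V \setminus H^\perp$ spanning a subspace that carries a $\delta$-portion of this excess Fourier mass, and set $H' := H \cap \bigcap_{i=1}^{k} \ker(\chi_{r_i})$. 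The energy identity then yields $\mathcal{E}(H') - \mathcal{E}(H) \geq \delta$.

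Finally, I would iterate: start with $H_0 := V$; whenever $H_i$ fails to be $\eps$-regular, refine to $H_{i+1}$ via the previous step. Since $0 \leq \mathcal{E} \leq 1$ throughout and each refinement increases $\mathcal{E}$ by at least $\delta$, the process terminates within $1/\delta$ rounds, producing an $\eps$-regular subspace of codimension at most $l := k/\delta$, as required.

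The main obstacle, in my view, is the energy-increment step: non-regularity supplies, for each bad coset, \emph{some} witnessing character of $H$, but the witness may vary with the coset, so bundling these coset-wise witnesses into a bounded family of global characters of $V$ carrying a macroscopic portion of the $\eps^3$ excess Fourier mass requires care well beyond naive pigeonhole. The Chang-style large-spectrum approach is the natural route, though the resulting polynomial dependence of $k$ and $1/\delta$ on $\eps^{-1}$ is inevitably a little lossy, so the final codimension bound $l$ will not be best-possible.
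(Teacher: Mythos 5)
The paper does not prove this lemma at all: it is quoted verbatim from Green's paper as a black box, with the remark that the codimension bound $l$ is of tower type, $l \le T(\eps^{-3})$. So the only question is whether your sketch of Green's argument is sound, and the energy-increment step as you describe it has a genuine gap. You correctly reduce non-regularity of $H$ to the statement that the off-$H^\perp$ Fourier mass $\sum_{r \notin H^\perp}|\widehat{1_X}(r)|^2$ exceeds $\eps^3$, but you then claim that a Chang-type large-spectrum argument extracts a \emph{bounded} family $r_1,\dots,r_k$ (with $k=k(\eps)$) carrying a portion $\delta(\eps)$ of this mass. That step fails. A local witness $\psi_v$ with $|\widehat{1_{Y_v}}(\psi_v)|>\eps$ only tells you that a signed sum of the $2^{d}$ global characters extending $\psi_v$ (where $d=\mathrm{codim}_V(H)$) is large; individually each such global coefficient may be as small as $\eps 2^{-d}$, so the excess mass $\eps^3$ can be spread over exponentially many (in $d$, hence in $n$ after a few iterations) characters, each lying far below any threshold $\eta(\eps)$. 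In that situation $\Lambda_\eta \setminus H^\perp$ is empty and Chang's theorem gives nothing: Chang bounds the dimension of the spectrum above a fixed threshold, it does not let you capture a fixed amount of $\ell^2$-mass with boundedly many characters. Indeed, if your step were correct it would give $l$ polynomial in $1/\eps$, which is known to be impossible — tower-type lower bounds for this regularity lemma appear already in Green's paper and were later strengthened (Hosseini, Lovett, Moshkovitz, Shapira), so your closing remark that the loss is only in sharpness has it backwards.

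The standard repair, and what Green actually does, is to give up on adding boundedly many characters per step: for every bad coset $H+v$ take its own witnessing character and intersect $H$ with all of their kernels (lifting each local character of $H$ to $V$ arbitrarily). Monotonicity of the local mean-square density under refinement guarantees each bad coset still contributes at least $\eps^2$ to the energy, so an $\eps$-fraction of bad cosets gives an increment of at least $\eps^3$, while the codimension grows from $d$ to at most $d+2^{d}$ in one step. Since the energy is bounded by $1$, the iteration stops after at most $\eps^{-3}$ rounds, and the codimension bound is exactly the tower $T(\eps^{-3})$ quoted after the lemma in the paper; your framework (the energy $\mathcal{E}(H)=\sum_{r\in H^\perp}|\widehat{1_X}(r)|^2$, its monotonicity and boundedness, and the termination argument) is otherwise correct.
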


In this lemma, $l$ is guaranteed to be at most $T(\eps^{-3})$, where $T(\alpha)$ is an exponential tower of $2$'s of height $\left \lceil{\alpha}\right \rceil$.

Next we have a counting lemma for $\eps$-uniform sets from [\ref{g}] and [\ref{tv}]. We simply state a special version that is adequate for our proof.

\begin{lemma}[{[\ref{g}], [\ref{tv}]}]\label{counting}
Let $X \subseteq V$ with $|X| = \alpha |V|$. If $0 < \eps < \frac{1}{2}$ and $X$ is $\eps$-uniform, then for any $u \in V$, $$|\{(x_1, x_2, x_3) \in X^3 \mid x_1+x_2+x_3 = u\}| \geq (\alpha^3 - \eps)|V|^2. $$
\end{lemma}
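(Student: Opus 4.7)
\medskip

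\noindent\textbf{Proof proposal.} The plan is to prove the lemma by Fourier analysis on $V=\GF(2)^n$, which is the standard setting for this kind of counting statement. For a function $f\colon V \to \bR$, write $\hat f(\xi) = \sum_{x \in V} f(x)(-1)^{\xi \cdot x}$, so that Parseval reads $\sum_\xi |\hat f(\xi)|^2 = |V|\sum_x f(x)^2$, Fourier inversion reads $f(x)=|V|^{-1}\sum_\xi \hat f(\xi)(-1)^{\xi \cdot x}$, and convolution satisfies $\widehat{f \ast g}=\hat f\,\hat g$, where $(f\ast g)(x)=\sum_y f(y)g(x+y)$ (we use $-y=y$).

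The first step is to rewrite the quantity of interest as a triple convolution. The number of triples $(x_1,x_2,x_3)\in X^3$ with $x_1+x_2+x_3 = u$ is exactly $(1_X \ast 1_X \ast 1_X)(u)$. By the convolution theorem and Fourier inversion this equals
\[
\frac{1}{|V|}\sum_{\xi \in V} \hat{1_X}(\xi)^3 (-1)^{\xi \cdot u}.
\]
The $\xi=0$ contribution is $|V|^{-1}\cdot |X|^3 = \alpha^3 |V|^2$, which is the main term.

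The second step is to translate $\eps$-uniformity into a bound on the non-trivial Fourier coefficients. Every hyperplane $H$ is the kernel of some nonzero linear functional $\xi \in V$, and then
\[
\hat{1_X}(\xi) \;=\; \sum_{x \in X}(-1)^{\xi \cdot x}\;=\;|H\cap X|-|X\setminus H|,
\]
so the $\eps$-uniformity of $X$ is equivalent to $|\hat{1_X}(\xi)|\le \eps|V|$ for every nonzero $\xi$. Bounding the error term using this and Parseval,
\[
\left|\frac{1}{|V|}\sum_{\xi\neq 0}\hat{1_X}(\xi)^3(-1)^{\xi\cdot u}\right|
\;\le\;\frac{1}{|V|}\max_{\xi\neq 0}|\hat{1_X}(\xi)|\cdot\sum_{\xi}|\hat{1_X}(\xi)|^2
\;\le\;\frac{1}{|V|}\cdot\eps|V|\cdot|V|\cdot|X|
\;\le\;\eps|V|^2,
\]
which combined with the main term gives the claimed lower bound of $(\alpha^3-\eps)|V|^2$.

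There is essentially no obstacle once one sets up the Fourier machinery, because in $\GF(2)^n$ the characters are exactly the $\pm 1$-valued linear functionals and hyperplanes are precisely their level sets, so the hypothesis is in perfect correspondence with the Fourier bound needed. The only subtlety worth flagging is the choice of normalisation (using unnormalised $\hat f$ versus expectation-normalised $\mathbb{E}$-convention), which affects where the factors of $|V|$ appear in Parseval and inversion; once a convention is fixed and checked, the three lines above carry the entire argument. A marginal strengthening is immediate: the same computation gives $\ge(\alpha^3-\alpha\eps)|V|^2$, but the stated form is all we need.
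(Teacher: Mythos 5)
Your proof is correct. The paper itself gives no proof of this lemma; it is quoted as a black box from Green [6] and Tao--Vu [14], and the Fourier-analytic argument you give (writing the count as $(1_X \ast 1_X \ast 1_X)(u)$, isolating the $\xi = 0$ term, converting $\eps$-uniformity into the bound $|\hat{1_X}(\xi)| \le \eps |V|$ for $\xi \neq 0$ via the hyperplane--character correspondence, and finishing with Parseval) is exactly the standard proof in those sources, down to the slightly stronger conclusion $(\alpha^3 - \alpha\eps)|V|^2$ that you note.
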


Note that the quantity $\alpha^3 2^{2n}$ is how many solutions to $x_1+x_2+x_3 = u$ one would expect to find if the set $X$ was random. 

We can now prove our key lemma, restated below.

\begin{lemma}\label{keylemma}
For all $0 < \alpha \leq 1$, there exists an integer $l$ such that for each $X \subseteq \GF(2)^n$ with $|X| \geq \alpha 2^{n}$, $X+X+X$ either contains a subspace of codimension $l$ or affine subspace of codimension $l$. . 
\end{lemma}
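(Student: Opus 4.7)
The plan is to apply Green's regularity lemma (Lemma \ref{Green's regularity}) to $X$, obtain a low-codimension subspace $H$, and then use the counting lemma (Lemma \ref{counting}) to show that some single coset of $H$ is already contained in $X+X+X$.

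First, I would choose a parameter $\eps = \eps(\alpha) > 0$ small enough that $\eps < \alpha/2$ and $\eps < (\alpha/2)^3$ (for example $\eps = \alpha^3/16$), and apply Lemma \ref{Green's regularity} to obtain a subspace $H \leq V = \GF(2)^n$ of codimension at most $l = l(\eps)$ that is $\eps$-regular with respect to $V$ and $X$. For each coset $H+v$, let $\alpha_v = |X \cap (H+v)|/|H|$. The identity $\sum_v \alpha_v = \alpha |V/H|$ together with the trivial bound $\alpha_v \leq 1$ yields, by a standard averaging argument, at least $(\alpha/2)|V/H|$ cosets on which $\alpha_v \geq \alpha/2$; call these \emph{dense}. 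By definition of $\eps$-regularity, at most $\eps |V/H|$ cosets $H+v$ have the property that the translated set $(X \cap (H+v))+v \subseteq H$ fails to be $\eps$-uniform. Since $\eps < \alpha/2$, there exists a coset $H+a$ that is simultaneously dense and such that $X_a := (X \cap (H+a))+a$ is an $\eps$-uniform subset of $H$ of density at least $\alpha/2$.

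Next, apply the counting lemma (Lemma \ref{counting}) with $H$ as the ambient space and $X_a$ as the uniform set: for every $u \in H$, the number of triples $(x_1,x_2,x_3) \in X_a^3$ with $x_1+x_2+x_3 = u$ is at least $((\alpha/2)^3 - \eps)|H|^2 > 0$ by the choice of $\eps$. Hence $X_a + X_a + X_a = H$. Since we are working in characteristic $2$, we have $X \cap (H+a) = X_a + a$ and $3a = a$, so
\[
(X \cap (H+a)) + (X \cap (H+a)) + (X \cap (H+a)) \;=\; X_a + X_a + X_a + 3a \;=\; H + a.
\]
Therefore $H + a \subseteq X+X+X$, and $H+a$ is either a linear subspace (if $a \in H$) or an affine subspace (if $a \notin H$) of codimension at most $l$. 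Taking $l = l(\alpha^3/16)$ completes the proof.

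The main obstacle is more technical than conceptual: one must be careful about which ambient space is used when invoking $\eps$-uniformity of $X_a$, so that the counting lemma applies with $|H|^2$ (rather than $|V|^2$) in the lower bound; this is the standard convention in the additive-combinatorial literature and is consistent with how Green's regularity produces uniform sets inside cosets. Once this interpretation is fixed, the argument reduces to a routine combination of Green's regularity, a pigeonhole over cosets, and a single application of the counting lemma.
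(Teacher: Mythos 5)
Your proposal is correct and follows essentially the same route as the paper's proof: apply Green's regularity lemma with $\eps$ a small constant multiple of $\alpha^3$, use an averaging/pigeonhole argument to find a coset of $H$ that is both dense and $\eps$-uniform, and then invoke the counting lemma inside $H$ to show that the entire coset $H+a$ lies in $X+X+X$, giving a subspace or affine subspace of codimension at most $l$. The only differences are cosmetic (e.g.\ $\eps = \alpha^3/16$ instead of $\alpha^3/9$), and your remark about interpreting uniformity of $X_a$ relative to the ambient space $H$ matches the paper's application of the counting lemma "with $H$ as the vector space."
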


\begin{proof}
Apply Lemma \ref{Green's regularity} with $\eps = \frac{\alpha^3}{9}$ to obtain an integer $l$, so that we obtain a subspace $H$ of codimension $k \leq l$ that is $\eps$-regular with respect to $X$ and $V$. 

There are exactly $2^k$ cosets of $H$. Pick $A \subseteq V$ so that $H+a$, $a \in A$ are the distinct cosets of $H$. Let
\begin{align*}
	A_{sparse} = \{a \in A : |X \cap (H+a)| < \frac{\alpha}{2}|H| \} 
\end{align*} 
\begin{align*}
	A_{bad} = \{a \in A : (X \cap (H+a))-a \text{ is not $\eps$-uniform} \}
\end{align*} 

Since $H$ is $\eps$-regular with respect to $X$ and $V$, it follows that $|A_{bad}| \leq \eps 2^k$. Note
	\begin{align*}
		\alpha 2^k |H| = \alpha 2^n  \leq |X| &= \sum_{a \in A} |X \cap (H+a)| \\
		 &\leq \frac{\alpha}{2}|H||A_{sparse}| + |H|(|A|-|A_{sparse}|).
	\end{align*} 
	
Therefore, it follows that $\alpha 2^k \leq \frac{\alpha}{2} |A_{sparse}| + 2^k - |A_{sparse}| $. Hence, $|A_{sparse}| \leq 2^k\frac{1-\alpha}{1-\frac{\alpha}{2}} = 2^k(1-\frac{\alpha}{2-\alpha})$. Therefore
	\begin{align*}
		|A| - |A_{sparse}| \geq \frac{\alpha}{2} 2^k > \eps 2^k.
	\end{align*} 

So there exists some $a_0 \in A \del (A_{sparse} \cup A_{bad})$, which means that $|X \cap (H+a_0)| \geq \frac{\alpha}{2} |H|$ and $(X \cap (H+a_0))-a_0$ is $\eps$-uniform. 

Let $u \in H+a_0$. We argue that $u \in X+X+X$. We apply Lemma \ref{counting} with $H$ as the vector space. Then the number of solutions to the equation $x_1+x_2+x_3 = u + a_0$ with $x_1, x_2, x_3 \in (X \cap (H+a_0))-a_0$ is at least
	\begin{align*}
		& \left(\frac{|(X \cap (H+a_0))-a_0|}{|H|}  \right)^3 |H|^2 - \eps|H|^2 \\
		&\geq \left(\frac{\alpha}{2}\right)^3 |H|^2 - \eps |H|^2 \geq \left( \frac{\alpha^3}{8}-\frac{\alpha^3}{9} \right) |H|^2 >0.
	\end{align*} 

So there exists $x_1, x_2, x_3 \in (X \cap (H+a_0))-a_0$ such that $x_1+x_2+x_3 = u+a_0$. Therefore, $(x_1 + a_0) + (x_2 + a_0) + (x_3 + a_0) = u$, as required. If $a_0 \in H$, $X+X+X$ contains a subspace, otherwise $X+X+X$ contains an affine subspace, of codimension $l$. 	
\end{proof}
\end{section}

\begin{section}{The tripods}
We define a class of matroids to which we refer as the \emph{tripods} (not to be confused with tripods in graph theory). Let $T_0 $ be the $1$-dimensional matroid with a one-element ground set. The tripods are constructed recursively as follows. The \emph{$k$-th order tripod} $T_k = (E_k, G_k)$, $k \geq 1$, is the matroid with a codimension-$3$ flat $H$ of $G_k$ and $x,y,z \notin H$ where $\cl(H \cup \{x,y,z\}) = G_k$ so that $E_k = (H \cap E_k) \cup (\{x,y,z\} + (H \cap E_k)) \cup \{x+y+z\}$ and $(E_k \cap H, H) \cong T_{k-1}$. 

The properties of the tripods we will need are as follows. The matroid $C_5$ is the \emph{five-element circuit}, the full-rank matroid whose ground set consists of five points that add to zero. For $t \geq 5$, we write $C_{5,t}$ for the $t$-dimensional matroid $(E,G)$ such that $(E,\cl(E)) \cong C_5$. Note $C_{5,4} \cong C_5$. 

\begin{lemma}\label{tripod_lemma}\
\begin{itemize}
	\item $T_k = (E_k, G_k)$ has dimension $3k+1$,
	\item When $k \geq 1$, there is a flat $F_k$ of dimension $2k+2$ for which $T_k | F_k \cong C_{5,2k+2}$. 
	\item $F_k \subseteq E_k \cup (E_k + E_k)$.
\end{itemize}
\end{lemma}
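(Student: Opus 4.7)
The plan is to prove all three properties simultaneously by induction on $k$. The dimension claim is immediate from $\dim(G_k) = \dim(H) + 3$. For the base case $k=1$, I would verify that $H$ is one-dimensional, so $H \cap E_1 = \{h\}$ is a single point and
\[
E_1 = \{h,\, x+h,\, y+h,\, z+h,\, x+y+z\}
\]
is a five-element full-rank subset of the four-dimensional $G_1$ whose elements sum to zero; this gives $T_1 \cong C_{5,4}$ with $F_1 := G_1$. For the third property in this case I would then observe that the ten pairwise sums $e+e'$ with $e \neq e'$ in $E_1$ are distinct and nonzero (otherwise four elements of $E_1$ would sum to zero, forcing the fifth to vanish), so $E_1$ together with these sums exhausts all fifteen points of $G_1$.

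For the inductive step, the plan is to build $F_k$ by extending $F_{k-1}$ in two carefully chosen new directions, specifically
\[
F_k := \cl(F_{k-1} \cup \{x+y,\, x+z\}).
\]
This is a $(2k+2)$-dimensional flat whose four cosets of $F_{k-1}$ lie respectively in the four cosets $H,\, (x+y)+H,\, (x+z)+H,\, (y+z)+H$ of $H$ inside $G_k$. The key observation is that the last three of these cosets of $H$ contain no elements of $E_k$: by the recursive construction, the only $E_k$-points outside $H$ live in $x+H$, $y+H$, $z+H$ together with the apex $x+y+z$. Hence $E_k \cap F_k = E_{k-1} \cap F_{k-1}$, and the inductive hypothesis applied to $T_{k-1}|F_{k-1} \cong C_{5,2k}$ immediately yields $T_k|F_k \cong C_{5,2k+2}$.

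For the third property in the inductive step, I would take any $p \in F_k$ and case-split on which of the four cosets of $F_{k-1}$ inside $F_k$ contains it. If $p \in F_{k-1}$, the inductive hypothesis together with $E_{k-1} \subseteq E_k$ suffices. Otherwise, by symmetry of $x,y,z$ assume $p = x+y+q$ where $q = 0$ or $q \in F_{k-1}$. When $q=0$, pick any $e \in E_{k-1}$ and write $p = (x+e)+(y+e)$; when $q \in E_{k-1}$, write $p = (x+y+z)+(z+q)$; and when $q = e_1+e_2$ for distinct $e_1,e_2 \in E_{k-1}$ (the remaining possibility by induction), write $p = (x+e_1)+(y+e_2)$. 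In each case both summands lie in $E_k$.

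The main obstacle is choosing the two new directions correctly: the extension from $F_{k-1}$ to $F_k$ must simultaneously avoid creating any new $E_k$-points (to preserve the $C_{5,*}$ structure) and ensure every new point of $F_k$ is realised as a sum of two $E_k$-elements. The pair $\{x+y,\, x+z\}$ threads this needle because the three new cosets of $H$ it creates are precisely the cosets missed by $E_k$, while the apex $x+y+z \in E_k$ together with the shifted copies $x+E_{k-1}$, $y+E_{k-1}$, $z+E_{k-1}$ inside $E_k$ provides just enough flexibility to express every point of the new cosets as a pairwise sum.
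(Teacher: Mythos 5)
Your proposal is correct and follows essentially the same route as the paper: the same flat $F_k=\cl(F_{k-1}\cup\{x+y,x+z\})$, the same observation that the three new cosets of $H$ miss $E_k$ (so $T_k|F_k\cong C_{5,2k+2}$), and the same three expressions $(x+e)+(y+e)$, $(x+y+z)+(z+q)$ and $(x+e_1)+(y+e_2)$ for the new points as sums of two elements of $E_k$. The only nit is in the base case: to conclude that $E_1$ and its ten pairwise sums exhaust all fifteen points of $G_1$ you also need that no pairwise sum lies in $E_1$ itself, which follows by the same parity argument (three elements of $E_1$ summing to zero would force the remaining two to coincide).
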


\begin{proof}
It follows straight from the definition that $T_k$ has dimension $3k + 1$. 

Note that when $k=1$, then $T_1 \cong C_5$, so all the statements are true for $k=1$ as well. From here, we work by induction on $k$. Take $T_k = (E_k, G_k)$ where $k \geq 2$ and assume that the statements are true for smaller values of $k$. 

By definition, we can write $E_k = (H \cap E_k) \cup (\{x,y,z\} + H \cap E_k) \cup \{x+y+z\}$ where $H$ is a codimension-$3$ flat and $x,y,z \notin H$ such that $(E_k \cap H, H) \cong T_{k-1}$ and $\cl(H \cup \{x,y,z\}) = G_k$. By induction, we can find a flat $F_{k-1}$ of dimension $2k$ for which $E_k | F_{k-1} \cong C_{5,2k}$ and $F_{k-1} \subseteq (E_k \cap H) \cup ((E_k \cap H)+ (E_k \cap H))$. We claim that $F_k = \cl(F_{k-1} \cup \{x+y, x+z\})$ suffices. $T_k | F_k \cong C_{5,2k+2}$ is immediate by definition. It remains to show the last statement. Let $v \in F_k \del E_k$. We now perform a case analysis.

If $v \in F_{k-1}$, then $v \in (E_k \cap H) + (E_k \cap H)$ by induction, so $v \in E_k + E_k$ so suppose not. If $v \in {x+y,x+z,y+z}$, then by symmetry we may assume $v=x+y$. Then we may pick any element $t \in E_k \cap H$, and $v = (x+ t) + (y + t) \in E_k + E_k$. The only elements that remain to be checked are of the form $v + w$ where $v \in \{x+y, x+z, y+z\}$ and $w \in F_{k-1}$. By symmetry we may assume that $v = x+y$. If $w \notin F_{k-1} \cap E_k$, then by induction, it follows that there exist two elements $a,b \in H \cap E_k$ for which $w = a + b$. But then $v + w = (a + x) + (b + y) \in E_k + E_k$. Hence, we may assume that $w \in F_{k-1} \cap E_k$. But then $v + w = (x+y+z) + (z+ w) \in E_k + E_k$. This proves finishes the proof.
\end{proof}
\end{section}

\begin{section}{Proof of the Main Theorem}
We are now ready to prove our main theorem. 

\begin{theorem}
For any $t \geq 1$, the $I_{1,t}$-free and triangle-free matroids have bounded critical number.
\end{theorem}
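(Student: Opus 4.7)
The plan is to prove the theorem by contradiction. Suppose $M=(E,G)$ is triangle-free and $I_{1,t}$-free with $\chi(M) \geq C$ for some large constant $C = C(t)$; the aim is to derive a contradiction by exhibiting an induced $I_{1,t}$-submatroid of $M$. The argument factors through two stages: first, produce an induced copy of $C_{5,s}$ in $M$ for some $s$ large enough in terms of $t$; second, extract an induced $I_{1,t}$ from this induced $C_{5,s}$.

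The second stage is a counting argument. An induced $C_{5,s}$ is an $s$-dimensional flat $F \subseteq G$ whose intersection with $E$ consists of exactly five points $\{a,b,c,d,e\}$ summing to zero and spanning a $4$-dimensional subflat of $F$. Fix $a$, quotient by $\langle a\rangle$ to obtain an $(s-1)$-dimensional space in which the images of $b,c,d,e$ are four specific nonzero points (with $\bar e = \bar b + \bar c + \bar d$). A union bound on Gaussian binomials shows that the number of $(t-1)$-dimensional subspaces of the quotient avoiding all four images is strictly positive for $s$ large enough in terms of $t$; any such subspace lifts to a $t$-dimensional subflat of $F$ through $a$ missing $\{b,c,d,e\}$, providing an induced $I_{1,t}$ in $M$ and contradicting $I_{1,t}$-freeness.

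For the first stage, I construct an induced tripod $T_k$ in $M$ of sufficiently large order $k$; Lemma \ref{tripod_lemma} then yields the desired induced $C_{5,2k+2}$ on $F_k$. The induced $T_k$ is built by induction on $k$. Given an induced $T_{k-1}$ on a subflat $G_{k-1}$ of $G$, the goal is to find three points $x,y,z \in G \setminus G_{k-1}$, linearly independent modulo $G_{k-1}$, such that $\{x,y,z\} + (E \cap G_{k-1}) \subseteq E$, $x+y+z \in E$, and no other point of $\cl(G_{k-1} \cup \{x,y,z\})$ lies in $E$. To produce such a triple, I first use Bose--Burton (Theorem \ref{bbt}) applied to $M^c$ to bound $|E|$ from below, then apply the Ramsey Theorem \ref{Ramsey} to pass to a large subgeometry $F \supseteq G_{k-1}$ on which $E \cap F$ has constant density $\alpha > 0$; finally, the key lemma (Lemma \ref{keylemma}) places a large subspace or affine subspace of $F$ inside $(E\cap F) + (E\cap F) + (E\cap F)$. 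An averaging argument inside this threefold sumset isolates candidate triples with the requisite additive structure, and a further application of Ramsey prunes to one satisfying the negative inducedness condition.

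The principal obstacle is enforcing the inducedness of the tripod at each stage: the positive conditions ($\{x,y,z\} + (E \cap G_{k-1}) \subseteq E$ and $x+y+z \in E$) are supplied by the key lemma, but the negative condition that no additional points of the $3$-dimensional extension lie in $E$ must be secured by iterated applications of Ramsey. This pruning at each of the $\Theta(t)$ inductive steps is what inflates the dependence of $C$ on $t$ into the tower-type bound acknowledged in the introduction. The bound is nevertheless finite, completing the proof.
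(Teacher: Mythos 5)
Your stage-two step (an induced $C_{5,s}$ with $s$ large yields an induced $I_{1,t}$) is fine and matches the paper, but stage one, which is where the theorem actually lives, has a genuine gap. First, the density you need is not available: from $\chi(M)\ge C$, Bose--Burton applied to $M^c$ gives only the absolute bound $|E|\ge 2^{C}$, not $|E|\ge\alpha 2^{n}$ for a constant $\alpha$, and Ramsey (Theorem \ref{Ramsey}) cannot ``pass to a subgeometry on which $E\cap F$ has constant density'' --- it returns a monochromatic flat for a finite colouring, and a flat monochromatic in the colour ``in $E$'' is already forbidden by triangle-freeness. So Lemma \ref{keylemma} cannot be applied to $E$ itself. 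Moreover, even if $E+E+E$ did contain a large subspace, that would not produce the triples you need: extending the tripod requires $x+y+z\in E$ \emph{and} $\{x,y,z\}+(E\cap G_{k-1})\subseteq E$, and this linkage to the existing tripod is precisely what forces the paper to work with an auxiliary set $X$ in a flat $G_2$ complementary to the flat $G_1$ housing the tripod: $X$ is the set of $v\notin E$ with $v+e\in E$ for every $e\in E\cap G_1$, and its density is proved by colouring $G_2$ by links into $G_1$ ($2^{3k+2}$ colours), invoking Ramsey, and observing that a monochromatic $t$-dimensional flat avoiding $X$ gives either a triangle or an induced $I_{1,t}$; Bose--Burton applied to $G_2\setminus X$ then gives $|X|\ge 2^{\dim G_2-GR(c,t)}$.

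Second, the shape ``always grow the tripod until contradiction'' is wrong. Growth is available exactly when $(X+X+X)\cap E\neq\varnothing$, and the content of the theorem is the complementary case: taking $k$ maximal (a $T_k$-\emph{restriction} suffices --- your insistence on an induced tripod, and the ``Ramsey pruning'' of the negative condition, are both unnecessary and unsupported, since Lemma \ref{tripod_lemma} gives $F_k\subseteq E_k\cup(E_k+E_k)$ and triangle-freeness then makes the resulting $C_{5,2k+2}$ automatically induced, while $k\ge t/2$ is already impossible), maximality forces $(X+X+X)\cap E=\varnothing$, and Lemma \ref{keylemma} applied to $X$ yields either a bounded-codimension subspace disjoint from $E$, bounding $\chi$ directly, or only an \emph{affine} subspace disjoint from $E$. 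In the affine case one does not obtain a large $E$-free flat; the paper passes to the bounding hyperplane, shows the restriction there is $I_{1,t-1}$-free, and inducts on $t$. Your proposal contains no treatment of the affine outcome and no induction on $t$, so even granting your growth step you would not reach the contradiction with $\chi(M)\ge C$; the ``averaging argument'' and iterated-Ramsey pruning do not substitute for this dichotomy.
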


\begin{proof}
The result is trivial when $t = 1$, so suppose $t \geq 2$. We will work by induction on $t$. Suppos the $I_{1,t-1}$-free, triangle-free matroids have critical number at most $c_{t-1}$.

Let $M = (E,G)$ be an $I_{1,t}$-free, triangle-free matroid. We may assume that $E \neq \varnothing$, as otherwise $\chi(M) = 0$, hence $M$ contains $T_0$. Suppose that $M$ contains a $T_k$-restriction (not necessarily induced). Let $G_1$ be the $(3k+1)$-dimensional flat for which $M|G_1$ contains this $T_k$-restriction. 

We claim that $k < \frac{t}{2}$. For a contradiction, suppose not. In particular, $k \geq 1$. Let $F_k \subseteq G_1$ be the $(2k+2)$-dimensional flat obtained from Lemma~\ref{tripod_lemma}, so that $M \rvert F_k$ contains $C_{5,2k+2}$ as a restriction; let $E' \subseteq E \cap F_k$ be the set of five elements corresponding to this $C_5$-restriction. But this is in fact induced; if we take $v \in F_k \del E'$, there exists $v_1,v_2 \in E \cap G_1$ by Lemma~\ref{tripod_lemma} such that $v = v_1+v_2$. Since $M$ is triangle-free, $v \notin E$. Hence $M \rvert F_k \cong C_{5,2k+2}$. Note that $C_{5,2k+2}$ contains an induced $I_{1,2k+1}$-restriction. But we assumed $t \leq 2k$, so this contradicts $I_{1,t}$-freeness. Hence, we may pick the largest integer $k \geq 0$ for which $M$ contains a $T_k$-restriction. 

Now, fix a flat $G_2$ such that $G_1 \cap G_2 = \varnothing$ and $\cl(G_1 \cup G_2) = G$. For each $v \in G_2$, we assign a colour $(e,S)$ where $e=1$ if $v \in E$ and $e=0$ otherwise, and $S = v + ((v+G_1) \cap E)$. Note that $S \subseteq G_1$. Hence we have at most $c = 2^{3k+2}$ colours. 

We now define a new matroid $N=(X,G_2)$, where $X$ consists of elements of $G_2$ for which their colours $(e, S)$ satisfy $(G_1 \cap E) \del S = \varnothing$ and $e=0$. We first claim that $N$ is dense.

\begin{claim}\label{dense}
$|X| \geq 2^{\dim(N)-GR(c,t)}$. 
\end{claim}

\begin{subproof}
If $G_2 \del X$ contains a $GR(c,t)$-dimensional flat, then by Theorem \ref{Ramsey}, we have a monochromatic $t$-dimensional flat $F'$contained in $G_2 \del X$, in colour $(e,S)$. If $e=1$, then $F' \subseteq E$, which contradicts triangle-freeness since $t \geq 2$. So $e=0$, meaning $F' \cap E = \varnothing$. But there also exists $v \in G_1 \cap E$ for which $(F' + v)\cap E = \varnothing$. This means that $M \rvert \cl(\{v\} \cup F')$ is an induced $I_{1,t}$-restriction, a contradiction. 

Now, we apply Theorem \ref{bbt} to conclude that $|G_2 \del X| \leq 2^{\dim(N)}(1-2^{-GR(c,t)+1})$. Hence, $|X| \geq 2^{\dim(N)}2^{-GR(c,t)}$
\end{subproof}

We make one further observation which follows immediately from the definition of tripods.

\begin{claim}\label{thirdpoint}
$(X+X+X) \cap E = \varnothing$. 
\end{claim}

\begin{subproof}
For a contradiction, suppose that there exist $v_1,v_2,v_3 \in X$ for which $v_1+v_2+v_3 \in E$. But then since $G_1$ contains a $k$-th order tripod, $M \rvert (\{v_1,v_2,v_3\} \cup G_1)$ contains a $(k+1)$-th order tripod, contradicting the maximality of $k$. 
\end{subproof}

Let $l$ be the integer obtained from applying Lemma \ref{keylemma} with $\alpha = 2^{-GR(c,t)}$. We now have two outcomes we consider separately.

\textbf{Case 1}: $X + X + X$ contains $\AG(\dim(N)-l,2)$.

We can find a flat $F'$ of codimension $l-1$ in $N$, and a hyperplane $F''$ of $F''$ for which $F' \del F'' \subseteq X+X+X$. In particular, by \ref{thirdpoint}, $(F' \del F'') \cap E = \varnothing$. Hence $M \rvert F''$ is $I_{1,t-1}$-free (and triangle-free). Since $F''$ has codimension at most $l+(3k+1)$ in $M$, we conclude by induction that $\chi(M) \leq c_{t-1} + 3k+l+1$. 

\textbf{Case 2}: $X+X+X$ contains $\PG(\dim(N)-l-1,2)$. 

By \ref{thirdpoint}, it follows that there exists a flat $F'$ of codimension $l$ for which $F' \cap E = \varnothing$. Hence $\chi(M) \leq l+3k+1$. 

Note that $k$ and $l$ are both functions of $t$. This completes the proof by induction on $t$. 
\end{proof}
\end{section}

\section*{References}
\newcounter{refs}
\begin{list}{[\arabic{refs}]}
{\usecounter{refs}\setlength{\leftmargin}{10mm}\setlength{\itemsep}{0mm}}

\item\label{bb}
R. C. Bose, R. C. Burton, 
A characterization of flat spaces in a finite geometry and the uniqueness of the Hamming and the MacDonald codes, 
J. Combin. Theory 1 (1966), 96--104. 

\item\label{bkknp}
M. Bonamy, F. Kardo\v{s}, T. Kelly, P. Nelson, L. Postle,
The structure of binary matroids with no induced claw or Fano plane restriction,
Advances in Combinatorics, 2019:1,17 pp.

\item\label{gn}
J. Geelen, P. Nelson,
The critical number of dense triangle-free binary matroids,
J. Combin. Theory Ser. B 116 (2016), 238-249.

\item\label{gn2}
J. Geelen, P. Nelson,
Odd circuits in dense binary matroids,
Combinatorica 37 (2017), 41-47.

\item\label{gs}
P. Govaerts, L. Storme,
The classification of the smallest non-trivial blocking sets in $\PG(n,2)$,
J. Combin. Theory Ser. A 113 (2006), 1543-1548.

\item\label{g}
B. Green,
A Szemer\'{e}di-type regularity lemma in abelian groups, with applications,
Geometric \& Functional Analysis GAFA 15 (2005), 340-376

\item\label{g85}
A. Gy\'arf\'as, Problems from the world surrounding perfect graphs, Proceedings of the International Conference on Combinatorial Analysis and its Applications, (Pokrzywna, 1985), Zastos. Mat. 19 (1987), 413--441.

\item\label{HJ}
R. A. Hales, R. I. Jewett,
Regularity and positional games,
Trans. Amer. Math. Soc. 106 (1963), 222-229

\item\label{nn1}
P. Nelson, K. Nomoto,
The structure of claw-free binary matroids,\\
arXiv:1807.11543 (2018).

\item\label{nn2}
P. Nelson, K. Nomoto,
The structure of $I_4$-free, triangle-free binary matroids.
arXiv:2005.00089 (2020)

\item\label{nnorin}
P. Nelson, S. Norin,
The smallest matroids with no large independent flat.
arXiv:1909.02045 (2019)

\item \label{oxley}
J. G. Oxley, 
Matroid Theory,
Oxford University Press, New York (2011).

\item\label{s81}
D.P. Sumner, 
Subtrees of a graph and chromatic number, in The Theory and Applications of Graphs, (G. Chartrand, ed.), John Wiley \& Sons, New York (1981), 557--576.

\item \label{tv}
T. C. Tao, V. H. Vu,
Additive Combinatorics, Cambridge Studies in Advanced Mathematics,
105, Cambridge University Press, Cambridge (2006)

\end{list}

\end{document}